\newtheorem{theorem}{Theorem}[section]{\bf}{\it}
\newtheorem{lemma}[theorem]{Lemma}{\bf}{\it}
{\bf}{\it}
\newtheorem{corollary}[theorem]{Corollary}{\bf}{\it}
{\bf}{\it} 
{\bf}{\it}
\newtheorem*{theorem*}{Theorem}
\newtheorem{remark}{Remark}
\newtheorem*{namedtheorem}{\theoremname}
\newcommand{\theoremname}{testing}
\theoremstyle{remark}
\theoremstyle{definition}
\theoremstyle{remark}
\numberwithin{equation}{section}
\newcommand{\diam}{\operatorname{diam}}
\newcommand{\R}{\mathbb R}
\newcommand{\Z}{\mathbb Z}
\newcommand{\loc}{{\operatorname{loc}}}
\newcommand{\dist}{{\operatorname{dist}\,}}
\newcommand{\Q}{{\mathbb{Q}}}
\newcommand{\Ext}{{\operatorname{Ext}}}
\newdimen\vintkern\vintkern11pt
\def\vint{-\kern-\vintkern\int}
\newcommand{\bS}{\mathbb{S}}
\newcommand{\haus}{\mathcal{H}}
\newcommand{\bT}{\mathbb{T}}
\newcommand{\ord}{\mathrm{ord}}
\newcommand{\cd}{\mathrm{cd}}
\newcommand{\rank}{\mathrm{rank}}
\begin{document}

\title{Rigidity of extremal quasiregularly elliptic manifolds}
\date{\today}

\author{Rami Luisto \and Pekka Pankka}
\address{Department of Mathematics and Statistics, P.O. Box 68 (Gustaf H\"allstr\"omin katu 2b), FI-00014 University of Helsinki, Finland}
\address{Department of Mathematics and Statistics, P.O. Box 68 (Gustaf H\"allstr\"omin katu 2b), FI-00014 University of Helsinki, Finland \and Department of Mathematics and Statistics, P.O. Box 35, FI-40014 University of Jyv\"askyl\"a, Finland}

\thanks{The authors are supported by the Academy of Finland project \#256228.}

\begin{abstract}
We show that for a closed $n$-manifold $N$ admitting a quasiregular mapping from Euclidean $n$-space the following are equivalent: (1) order of growth of $\pi_1(N)$ is $n$, (2) $N$ is aspherical, and (3) $\pi_1(N)$ is virtually $\Z^n$ and torsion free.
\end{abstract}

\subjclass[2010]{30C65 (57M12)}

\maketitle

\section{Introduction}

In this note we consider closed Riemannian manifolds $N$ admitting a quasiregular mapping from $\R^n$, i.e.\;\emph{quasiregularly elliptic manifolds}. A non-constant continuous mapping $f\colon \R^n \to N$ is \emph{($K$-)quasiregular} if $f$ belongs to the Sobolev space $W^{1,n}_\loc(\R^n,N)$ and satisfies the distortion inequality
\[
|Df|^n \le K J_f \quad \textrm{a.e.\ in } \R^n,
\]
where $Df$ is the differential of the map $f$ and $J_f$ the Jacobian determinant. By Reshetnyak's theorem, quasiregular mappings are \emph{branched covers}, that is, discrete and open mappings, see e.g.\;\cite[Theorem I.4.1]{Rickman-book}. 

In dimensions $n=2,3$, closed quasiregularly elliptic manifolds are fully understood. In dimension $n=2$, the manifolds are the $2$-sphere $\bS^2$ and the $2$-torus $\bT^2$ by the uniformization theorem and Stoilow factorization. In dimension $n=3$, the possible closed targets of quasiregular mappings from $\R^3$ are $\bS^3$, $\bS^2\times \bS^1$, $\bT^3$, and their quotients. The completeness of this list follows from the geometrization theorem; see Jormakka \cite{JormakkaJ:Quamf3}. 

By Varopoulos' theorem \cite[pp. 146-147]{VaropoulosN:Anagog} \emph{the fundamental group of a quasiregularly elliptic $n$-manifold, $n\ge 2$, has polynomial growth of order at most $n$}. In particular, the fundamental group is virtually nilpotent by Gromov's theorem,
see \cite{Gromov81}.

Our main theorem is the following characterization of quasiregularly elliptic manifolds which are extremal in the sense of Varopoulos' theorem. Recall that a manifold $N$ is \emph{aspherical} if $\pi_k(N)=0$ for $k>1$. 
We denote the degree of polynomial growth of a finitely generated group $G$ by $\ord(G)$.

\begin{theorem}
\label{thm:TFAE}
Let $N$ be a closed quasiregularly elliptic $n$-manifold. Then the following are equivalent:
\begin{enumerate}
\item $\ord(\pi_1(N)) = n$, \label{item:ord}
\item $N$ is aspherical, and \label{item:aspherical}
\item $\pi_1(N)$ is virtually $\Z^n$ and torsion free. \label{item:Zn}
\end{enumerate}
\end{theorem}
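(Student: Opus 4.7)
The plan is to establish the cycle $(3)\Rightarrow(1)$, $(2)\Rightarrow(3)$, and $(1)\Rightarrow(2)$. The first implication is immediate, since a group that is virtually $\Z^n$ has polynomial growth of order exactly $n$.

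For $(2)\Rightarrow(3)$, I would exploit the Bass--Guivarc'h formula for the growth of finitely generated nilpotent groups. By Varopoulos' theorem the growth order of $\pi_1(N)$ is at most $n$, and by Gromov's theorem $\pi_1(N)$ is virtually nilpotent. Passing to a torsion-free nilpotent subgroup $\Gamma\le\pi_1(N)$ of finite index and to the corresponding finite cover $\tilde N\to N$, asphericity of $N$ ensures that $\tilde N$ is a $K(\Gamma,1)$ of dimension $n$. Hence $\cd(\Gamma)=n$, and for torsion-free nilpotent groups this agrees with the Hirsch length $h(\Gamma)=\sum_{i\ge 1}\rank(\gamma_i\Gamma/\gamma_{i+1}\Gamma)$. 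The Bass--Guivarc'h formula
\[
\ord(\Gamma)\;=\;\sum_{i\ge 1} i\cdot\rank(\gamma_i\Gamma/\gamma_{i+1}\Gamma),
\]
combined with Varopoulos' bound $\ord(\Gamma)\le n=h(\Gamma)$, forces $\rank(\gamma_i\Gamma/\gamma_{i+1}\Gamma)=0$ for every $i\ge 2$. Thus $\Gamma$ is abelian, so $\Gamma\cong\Z^n$ and $\pi_1(N)$ is virtually $\Z^n$; torsion-freeness of $\pi_1(N)$ follows because the aspherical manifold $N$ provides a finite-dimensional $K(\pi_1(N),1)$.

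The core of the argument is $(1)\Rightarrow(2)$. Under the hypothesis $\ord(\pi_1(N))=n$, Gromov's theorem again yields a finite cover $\tilde N\to N$ with $\pi_1(\tilde N)=\Gamma$ torsion-free nilpotent and $\ord(\Gamma)=n$. The plan is first to prove $\Gamma\cong\Z^n$, and then to deduce asphericity of $\tilde N$. For the algebraic step, $h(\Gamma)\le\ord(\Gamma)=n$, with equality iff $\Gamma$ is abelian; to rule out $h(\Gamma)<n$, I would use the classifying map $c\colon\tilde N\to B\Gamma$ into the compact nilmanifold $B\Gamma$, whose universal cover is a simply-connected nilpotent Lie group of real dimension $h(\Gamma)$ and Bass volume growth $r^n$, combined with the volume distortion for the lifted quasiregular map $\rn\to\tilde N$; together these should force $h(\Gamma)=n$ and hence $\Gamma\cong\Z^n$. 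Once this is established, a degree-one classifying map $\tilde N\to\bT^n$ together with the Bonk--Heinonen bound on the total Betti number of a quasiregularly elliptic manifold should give $\tilde N$ the rational cohomology of $\bT^n$; the universal cover of $\tilde N$ is then rationally acyclic, so by an iterated Hurewicz argument $\tilde N$ is aspherical, and therefore so is $N$.

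I expect the principal obstacle to be the step $(1)\Rightarrow(2)$: both translating the extremality $\ord(\pi_1(N))=n$ into the Hirsch-length identity $h(\Gamma)=n$, and controlling the higher homotopy of $\tilde N$ once $\pi_1(\tilde N)\cong\Z^n$ is known. Each sub-step must bridge a purely algebraic invariant of $\pi_1(N)$ to analytic invariants (volume or cohomology) of the quasiregular source $\rn$, with the distortion inequality and Bonk--Heinonen type cohomological bounds as the only available geometric input.
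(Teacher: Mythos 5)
Your implications $(3)\Rightarrow(1)$ and $(2)\Rightarrow(3)$ match the paper's proof essentially verbatim: passing to a torsion-free nilpotent finite-index subgroup, using asphericity to bound cohomological dimension, equating it with the Hirsch length, and invoking the Bass--Guivarc'h formula to force $\rank(\gamma_i\Gamma/\gamma_{i+1}\Gamma)=0$ for $i\ge 2$. That part is fine.

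The step $(1)\Rightarrow(2)$, which you correctly identify as the crux, is where your plan diverges from the paper and where it has genuine gaps. You propose first to establish $\Gamma\cong\Z^n$ by algebraic and volume-growth arguments, and then to deduce asphericity of a finite cover from a rational-cohomology comparison with $\bT^n$. Two problems. First, the Bonk--Heinonen bound on $\dim H^*(N;\Q)$ is $C(n,K)$, not $2^n$; the sharp $2^n$ bound under $\ord(\pi_1(N))=n$ is the \emph{corollary} that this theorem produces, so invoking it here is circular. Second, and more fundamentally, even if one knows $\pi_1\cong\Z^n$ and $H^*(\tilde N;\Q)\cong H^*(\bT^n;\Q)$, asphericity does not follow: rational acyclicity of the universal cover is not automatic (torsion in integral homology is invisible to $\Q$-cohomology), and Hurewicz requires integral homology. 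Concretely, a connected sum $\bT^n\#\Sigma$ with $\Sigma$ a simply connected rational homology sphere has $\pi_1=\Z^n$ and the rational cohomology of $\bT^n$ but is not aspherical, so your closing ``iterated Hurewicz'' step needs a genuinely new input, not just the Betti-number bound. The intermediate step forcing $h(\Gamma)=n$ via a classifying map into a nilmanifold and ``volume distortion'' is also only a heuristic as written.

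The paper avoids the algebraic detour entirely. It lifts a Bonk--Heinonen/Zalcman-normalized quasiregular map $f\colon\R^n\to N$ satisfying $\int_{B(x,r)}J_f\lesssim r^n$ to the \emph{universal} cover $\tilde N$, shows that $\tilde N$ is Ahlfors $n$-regular and $n$-Loewner (via a $(1,1)$-Poincar\'e inequality coming from polynomial growth of $\pi_1(N)$, following Kleiner/Saloff-Coste), and then applies the Onninen--Rajala theorem to conclude that the lifted map $\tilde f\colon\R^n\to\tilde N$ is \emph{proper}. Asphericity then falls out of a clean cohomological argument: if $\pi_k(\tilde N)\ne 0$ for some smallest $k\ge 2$, Hurewicz and universal coefficients give $H^\ell(\tilde N;\Z)\ne0$ for some $2\le\ell<n$, and Poincar\'e duality produces classes $c\in H^\ell(\tilde N;\Z)$ and $c'\in H^{n-\ell}_c(\tilde N;\Z)$ with $c\cup c'\ne0$; but the proper branched cover $\tilde f$ pulls $c\cup c'$ back nontrivially to $H^n_c(\R^n;\Z)$, while $f^*c=0$ since $\R^n$ has no cohomology in degree $\ell$. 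The missing idea in your proposal is precisely this: pass to the universal cover, establish properness of the lift via the Loewner property, and use compactly supported cohomology and Poincar\'e duality there, rather than trying to pin down $\pi_1$ first and then argue about higher homotopy from Betti numbers.
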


Given the classification of quasiregularly elliptic manifolds in dimensions $n = 2,3$, we conclude that, in these dimensions, only manifolds $N$ satisfying \eqref{item:ord} are quotients of the $n$-torus $\bT^n$. For $n\ge 5$, by deep rigidity theorems of Hsiang and Wall \cite{HsiangW:HomtII} and Farrell and Hsiang \cite{FarrellF:Topcfa} conditions (2) and (3) imply that $N$ is homeomorphic to a quotient of $\bT^n$; see also Freedman and Quinn \cite[Section 11.5]{FreedmanM:Top4m} for the extension to $n=4$. Since the $n$-torus has a unique Lipschitz structure for $n\ge 5$ by Sullivan's theorem \cite[Theorem 2]{SullivanD:Hypgh}, we have the following corollary.
 
\begin{corollary}
\label{cor:Tn}
Let $N$ be a closed quasiregularly elliptic $n$-manifold, $n\ge 2$, with $\ord(\pi_1(N))=n$. Then there exists a (topological) covering map $\bT^n \to N$. For $n\ne 4$, there exists a locally bilipschitz covering map $\bT^n \to N$. 
\end{corollary}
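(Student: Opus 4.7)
The plan is to assemble Theorem~\ref{thm:TFAE} with the rigidity results recalled in the paragraph preceding the corollary. First I would apply Theorem~\ref{thm:TFAE} to translate the hypothesis $\ord(\pi_1(N)) = n$ into the conclusions that $N$ is aspherical and that $\pi_1(N)$ is virtually $\Z^n$ and torsion free. Choosing a finite-index subgroup $\Gamma \cong \Z^n$ of $\pi_1(N)$ and passing to the corresponding finite-sheeted cover $\tilde{N} \to N$, I obtain a closed aspherical $n$-manifold $\tilde{N}$ with $\pi_1(\tilde{N}) \cong \Z^n$. In particular, $\tilde{N}$ is a $K(\Z^n,1)$ and is therefore homotopy equivalent to $\bT^n$. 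It will then suffice to construct a (locally bilipschitz) homeomorphism $\bT^n \to \tilde{N}$ and compose it with $\tilde{N} \to N$ to obtain the desired covering map.

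For the purely topological assertion I would treat the three dimension ranges separately. When $n = 2, 3$ the classification recalled in the introduction already exhibits $N$ as a smooth quotient of $\bT^n$, so a smooth Riemannian covering (in particular locally bilipschitz) is immediate. For $n \ge 5$, the Borel-type rigidity theorems of Hsiang--Wall and Farrell--Hsiang apply to the closed aspherical manifold $\tilde{N}$ with $\pi_1(\tilde{N}) \cong \Z^n$ and yield a homeomorphism $\bT^n \to \tilde{N}$. The remaining dimension $n = 4$ is handled by the Freedman--Quinn extension in \cite[Section 11.5]{FreedmanM:Top4m}. This establishes the first assertion of the corollary for all $n \ge 2$.

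To upgrade from a homeomorphism to a bilipschitz homeomorphism in the case $n \ne 4$, I would invoke Sullivan's theorem, which asserts that for $n \ge 5$ the topological manifold $\bT^n$ carries a unique Lipschitz structure up to bilipschitz equivalence. The smooth Riemannian structure on $\tilde{N}$, pulled back from $N$, provides a Lipschitz structure on $\tilde{N}$, which transports via the homeomorphism to a Lipschitz structure on $\bT^n$; Sullivan's uniqueness then promotes this homeomorphism to a bilipschitz map $\bT^n \to \tilde{N}$. Composing with the Riemannian cover $\tilde{N} \to N$ produces the required locally bilipschitz covering map.

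The main obstacle, and the reason dimension $n = 4$ is excluded from the second assertion, is precisely this last step: Sullivan's uniqueness of Lipschitz structures is not available in dimension four, so there is no direct route from a topological cover to a bilipschitz one in that case. The remaining dimensions split according to whether the classification ($n = 2, 3$) or the topological Borel-type rigidity theorems ($n \ge 5$) supply the homeomorphism, and the rest of the proof is essentially bookkeeping that assembles Theorem~\ref{thm:TFAE} with the rigidity theorems already cited by the authors.
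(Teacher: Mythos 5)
Your proof is correct and follows essentially the same route the paper sketches in the paragraph preceding the corollary: Theorem~\ref{thm:TFAE} provides asphericity and the virtually $\Z^n$ torsion-free fundamental group, the classification handles $n=2,3$, Hsiang--Wall and Farrell--Hsiang (resp.\ Freedman--Quinn) give the topological cover for $n\ge 5$ (resp.\ $n=4$), and Sullivan's uniqueness of Lipschitz structures upgrades to a bilipschitz cover when $n\ne 4$. The only cosmetic difference is that you pass to the finite cover with $\pi_1\cong\Z^n$ and invoke rigidity there, while the paper applies the rigidity theorems to $N$ directly to conclude $N$ is homeomorphic to a quotient of $\bT^n$; the two formulations are interchangeable.
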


Theorem \ref{thm:TFAE} can be viewed as a \emph{(branched) quasiconformal Bieberbach theorem}. By the classical Bieberbach theorem, \emph{an $n$-dimensional crystallographic group $\Gamma$ (that is, a cocompact discrete group of Euclidean isometries of $\R^n$) is virtually $\Z^n$ and torsion free}.
By a classical result of Auslander and Kuranishi \cite{AuslanderL:HolglE} all groups which are virtually $\Z^n$ and torsion free are crystallographic; see also Thurston \cite[Section 4.2]{ThurstonW:3dimgt}.

\begin{corollary}
  Let $N$ be a closed orientable Riemannian $n$-manifold, $n\ne 4$, satisfying $\ord(\pi_1(N))=n$. Then $N$ is quasiregularly elliptic if and only if $N$ is bilipschitz homeomorphic to $\R^n/\Gamma$, where $\Gamma$ is a crystallographic group.
\end{corollary}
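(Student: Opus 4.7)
My plan is to prove the two directions separately, relying on Theorem \ref{thm:TFAE} together with the rigidity inputs already assembled for Corollary \ref{cor:Tn}.

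The reverse implication is essentially immediate. If $h \colon \R^n/\Gamma \to N$ is a bilipschitz homeomorphism, then $\Gamma$ must act freely on $\R^n$ (since $\R^n/\Gamma$ is a manifold), so the quotient projection $\pi \colon \R^n \to \R^n/\Gamma$ is a Riemannian covering, and in particular a $1$-quasiregular mapping. The composition $h \circ \pi \colon \R^n \to N$ is then quasiregular as the composition of a quasiregular mapping with a bilipschitz (hence quasiconformal) homeomorphism, so $N$ is quasiregularly elliptic.

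For the forward implication, suppose $N$ is quasiregularly elliptic. Theorem \ref{thm:TFAE} gives that $\pi_1(N)$ is virtually $\Z^n$ and torsion free, and that $N$ is aspherical. By the Auslander--Kuranishi theorem we may realize $\pi_1(N)$ as a torsion-free crystallographic group $\Gamma \subset \Isom(\R^n)$ acting freely and cocompactly on $\R^n$, so $M := \R^n/\Gamma$ is a closed flat -- hence aspherical -- $n$-manifold with $\pi_1(M) \cong \pi_1(N)$. Two closed aspherical manifolds with isomorphic fundamental groups are homotopy equivalent, so $N$ and $M$ are. The topological rigidity results cited just before Corollary \ref{cor:Tn} -- Hsiang--Wall for $n \ge 5$, and the classifications arising from uniformization ($n = 2$) and geometrization ($n = 3$) -- then upgrade this to a homeomorphism $N \to M$, and Sullivan's uniqueness of Lipschitz structures in dimension $n \ne 4$ allows the homeomorphism to be chosen bilipschitz with respect to the Riemannian-induced Lipschitz structures on both sides.

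The main obstacle is organizational rather than technical: all the pieces -- Theorem \ref{thm:TFAE}, Auslander--Kuranishi, the dimension-specific topological rigidity theorems, and Sullivan's theorem -- are already invoked in the proof of Corollary \ref{cor:Tn}. One only has to observe that the same chain of arguments, applied with the full fundamental group $\pi_1(N)$ in place of the finite-index torsion-free abelian subgroup corresponding to the torus cover, produces a flat quotient $\R^n/\Gamma$ which is bilipschitz to $N$ itself rather than merely a bilipschitz torus cover of it. The hypothesis $n \ne 4$ is used exactly once, in the final appeal to Sullivan's theorem, as in Corollary \ref{cor:Tn}.
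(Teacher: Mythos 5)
Your proposal follows the same route the paper intends (the paper offers no explicit proof of this corollary, presenting it as a consequence of the discussion preceding it): Theorem \ref{thm:TFAE} together with Auslander--Kuranishi, the topological rigidity theorems, and Sullivan's Lipschitz Hauptvermutung, with the reverse direction given by composing the Riemannian covering $\R^n \to \R^n/\Gamma$ with the bilipschitz homeomorphism. Two small points worth correcting. First, for the forward direction the relevant topological rigidity input in dimensions $n \ge 5$ is the Farrell--Hsiang theorem on flat manifolds (cited in the paper alongside Hsiang--Wall); Hsiang--Wall concerns homotopy tori, which suffices for Corollary \ref{cor:Tn} but not for the general Bieberbach quotient appearing here. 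Second, in the reverse direction your assertion that ``$\Gamma$ must act freely on $\R^n$ (since $\R^n/\Gamma$ is a manifold)'' is not correct as stated: a crystallographic group with torsion can have a quotient that is a topological manifold (e.g.\ orientation-preserving wallpaper groups yield $S^2$). The intended reading of the corollary is that $\Gamma$ is the torsion-free crystallographic group (Bieberbach group) produced by the forward direction, in which case the free action is built in; alternatively one should argue that the hypothesis $\ord(\pi_1(N)) = n$ forces the action to be free, which takes an extra line. Either way your argument is easily repaired and otherwise sound.
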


In particular, we obtain a partial
answer to a question of Bonk and Heinonen \cite[p. 222]{BonkM:Quamc}. 
\begin{corollary}
  Let $N$ be a closed quasiregularly elliptic $n$-manifold  satisfying
  $\ord(\pi_1(N)) = n$.
  Then
  $$\dim H^\ast(N ; \Q ) \leq 2^n.$$
\end{corollary}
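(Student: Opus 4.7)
The plan is to combine Theorem \ref{thm:TFAE} with a standard transfer argument. By Theorem \ref{thm:TFAE}, the hypothesis $\ord(\pi_1(N))=n$ gives both that $N$ is aspherical and that $\pi_1(N)$ contains $\Z^n$ as a finite-index torsion-free subgroup, say of index $d$. Since $N$ is aspherical, $N$ is a $K(\pi_1(N),1)$, so
\[
H^*(N;\Q) \cong H^*(\pi_1(N);\Q),
\]
and the problem reduces to bounding the rational cohomology of $\pi_1(N)$.

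For the finite-index inclusion $\Z^n \hookrightarrow \pi_1(N)$, the composition of corestriction with restriction is multiplication by $d$, which is an isomorphism on rational cohomology. Hence the restriction homomorphism
\[
H^*(\pi_1(N);\Q) \longrightarrow H^*(\Z^n;\Q)
\]
is injective. By the classical computation $H^*(\Z^n;\Q) \cong H^*(\bT^n;\Q) \cong \bigwedge^* \Q^n$, a vector space of total dimension $2^n$, this yields the bound $\dim H^*(N;\Q) \leq 2^n$.

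Alternatively, and in a slightly more topological spirit, one could instead invoke Corollary \ref{cor:Tn}, which provides a finite topological covering $p \colon \bT^n \to N$ of degree $d$. The associated transfer $\tau \colon H^*(\bT^n;\Q) \to H^*(N;\Q)$ satisfies $\tau \circ p^* = d \cdot \id$, so $p^*$ is injective on rational cohomology and the same bound follows. No substantive obstacle arises beyond Theorem \ref{thm:TFAE} itself; granted that theorem (or its Corollary \ref{cor:Tn}), the present corollary is a one-line application of standard cohomological transfer together with the Künneth computation for $\bT^n$.
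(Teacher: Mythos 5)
Your argument is correct, and since the paper states this corollary without an explicit proof (as a one-line consequence prefaced by ``In particular''), your reasoning fills in exactly the argument the authors intend. The most direct route is your alternative via Corollary~\ref{cor:Tn}: that corollary produces a covering map $p\colon \bT^n \to N$ for every $n \geq 2$ (the topological version does not exclude $n=4$), and since both $\bT^n$ and $N$ are closed, the covering is finite, so the transfer $\tau$ with $\tau \circ p^* = d\cdot\mathrm{id}$ shows $p^*$ is injective on $\Q$-cohomology, giving the bound $2^n$. Your first argument via asphericity, group cohomology, and the corestriction--restriction composition is an equivalent restatement and equally valid; both are standard and land in the same place.
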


Finally, Theorem \ref{thm:TFAE} bears a close resemblance to a result of Gromov on elliptic manifolds \cite[Corollary 2.43]{Gromov-book}: \emph{The fundamental group of a closed aspherical elliptic manifold is virtually $\Z^n$}. A manifold $N$ is called \emph{elliptic} if there exists a Lipschitz map $\R^n \to N$ of non-zero asymptotic degree. We refer to \cite[Section 2.41]{Gromov-book} for a detailed discussion on elliptic manifolds. By Theorem \ref{thm:TFAE}, for quasiregularly elliptic manifolds, the topological assumption on asphericity can be replaced by the geometric assumption on the Euclidean volume growth of the universal cover. 

\subsection{Sketch of the proof of Theorem \ref{thm:TFAE}}

The main content of Theorem \ref{thm:TFAE} is that \eqref{item:ord} implies \eqref{item:aspherical}. Well-known arguments in cohomological group theory show that in this setting \eqref{item:aspherical} implies \eqref{item:Zn} since the growth rate of the fundamental group of a quasiregularly elliptic manifold is polynomial. Condition \eqref{item:Zn} trivially implies \eqref{item:ord}. 

The rest of the paper is devoted to the proofs of implications \eqref{item:ord}$\Rightarrow$\eqref{item:aspherical} and \eqref{item:aspherical}$\Rightarrow$\eqref{item:Zn}. In Section \ref{sec:Loewner} we discuss the observation that the universal cover $\tilde N$ of $N$ is a \emph{Loewner space} in the sense of Heinonen and Koskela \cite{HK}. The proof is based on the verification of a $(1,n)$-Poincar\'e inequality on $\tilde N$. Using a similar argument as in \cite{KleinerB:NewpGt}, we give a simple proof for a $(1,1)$-Poincar\'e inequality on $\tilde N$, which yields the required $(1,n)$-Poincar\'e inequality trivially; note that Saloff-Coste's $(2,2)$-Poincar\'e inequality for $\pi_1(N)$ in \cite{KleinerB:NewpGt} also suffices.

Using the Euclidean volume growth and the Loewner property of $\tilde N$, we show that there exists a proper map $\R^n \to \tilde N$ and that $N$ is aspherical. 
Finally, in Section \ref{sec:cgt}, we discuss the implication that asphericity of $N$ virtually detects the group $\pi_1(N)$. The argument is almost verbatim to the proof of Bridson and Gersten in \cite[Theorem 5.9]{BridsonGersten} for quasi-isometric rigidity of $\Z^n$, although our method allows us to detect the quasi-isometry type of $\pi_1(N)$ only posteriori.

\bigskip
\noindent
\textbf{Acknowledgements.} 
We would like to thank the referees for suggestions and helpful remarks.

\section{The Loewner property}
\label{sec:Loewner}

A metric measure space $(X,d,\mu)$ is \emph{$n$-Loewner} if there exists a function $\phi \colon [0,\infty) \to [0,\infty)$ such that
\begin{equation}
\label{eq:Loewner}
\operatorname{mod}_n (E,F) \geq \phi(t)
\end{equation}
whenever $E$ and $F$ are two disjoint, nondegenerate continua in $X$ and
\[
t \geq \frac{\dist(E,F)}{\min(\diam E, \diam F)}.
\]
Here $\operatorname{mod}_n (E,F)$ is the \emph{$n$-modulus} of the family $\Gamma(E,F)$ of all paths connecting $E$ and $F$, that is,
\[
\operatorname{mod}_n (E,F) = \inf \int_X \rho^n \, \mathrm{d}\mu,
\]
where the infimum is taken over all nonnegative Borel functions $\rho \colon X \to [0,\infty]$ satisfying
\[
\int_\gamma \rho \,\mathrm{d}s \geq 1
\] 
for all locally rectifiable paths $\gamma\in \Gamma(E,F)$.
 
In this section, we consider the Loewner property of universal covers $\tilde N$ of closed Riemannian manifolds $N$ satisfying $\ord(\pi_1(N))=n$. The Riemannian metric induced by the covering $\tilde N \to N$ makes $\tilde N$ into a geodesic metric space with Euclidean volume growth, that is, $\tilde N$ is \emph{Ahlfors $n$-regular}:
\[
\haus^n(B(x,r)) \approx r^n
\]
for every ball $B(x,r)$ in $\tilde N$ of radius $r>0$ about $x\in \tilde N$. 

It seems that the following theorem has not been reported in the literature although it is well-known to the experts. 

\begin{theorem}\label{theorem:Loewner}
Let $N$ be a closed and connected Riemannian $n$-manifold for $n \geq 2$ satisfying $\ord (\pi_1(N)) = n$. Then the universal cover $\tilde N$ of $N$ is $n$-Loewner.
\end{theorem}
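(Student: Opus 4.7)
My plan is to verify the hypotheses of Heinonen--Koskela's characterization: a complete Ahlfors $n$-regular metric measure space supporting a weak $(1,n)$-Poincar\'e inequality is $n$-Loewner \cite{HK}. Thus I need to check two things about $\tilde N$, equipped with the Riemannian volume $\haus^n$: first, that it is Ahlfors $n$-regular; second, that it supports a $(1,n)$-Poincar\'e inequality. Note that $\tilde N$ is complete and geodesic since $N$ is closed.

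For Ahlfors $n$-regularity, on scales $r$ below the injectivity radius of $N$, the ball $B(x,r)\subset \tilde N$ is bi-Lipschitz to a Euclidean ball with uniform constants, giving $\haus^n(B(x,r)) \approx r^n$. For larger $r$, fix a compact fundamental domain $D\subset \tilde N$ for the deck action of $\pi_1(N)$. By the Milnor--\v{S}varc lemma, the orbit map yields a quasi-isometry between $\pi_1(N)$ (equipped with a word metric) and $\tilde N$, so the number of translates $gD$ meeting $B(x,r)$ is comparable to the cardinality of a word-metric ball of radius $\approx r$ in $\pi_1(N)$. By Varopoulos' theorem and the hypothesis $\ord(\pi_1(N))=n$, together with the polynomial-growth lower bound that is automatic in a group of polynomial growth, this cardinality is $\approx r^n$; whence $\haus^n(B(x,r)) \approx r^n$ for all $r \geq \diam D$ as well.

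The heart of the matter is a $(1,1)$-Poincar\'e inequality on $\tilde N$, which by H\"older implies the required $(1,n)$-inequality. Following the plan sketched in the introduction, I would argue at two scales. On scales below the injectivity radius of $N$, the Poincar\'e inequality is immediate from the local Riemannian geometry of $N$. On large scales the polynomial growth of $\pi_1(N)$ must take over: one establishes a combinatorial $(1,1)$-Poincar\'e inequality on the Cayley graph of $\pi_1(N)$ by the path-averaging argument of Kleiner \cite{KleinerB:NewpGt}, in which the volume doubling and the uniform local geometry of the graph are the essential inputs. The inequality is then transferred to $\tilde N$ via the Milnor--\v{S}varc quasi-isometry by discretizing a given Lipschitz function through its averages over translated fundamental domains and chaining the small-scale and large-scale estimates. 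As the introduction notes, a shortcut runs through Saloff-Coste's $(2,2)$-Poincar\'e inequality for polynomial-growth groups, which already implies the desired $(1,n)$-inequality after standard self-improvement.

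The main obstacle I anticipate is the combinatorial core on the Cayley graph: the path-averaging argument requires a multiplicity bound stating that almost-geodesic paths between pairs of points in a word ball of radius $r$ do not concentrate at any single vertex of a slightly dilated ball. Matching the polynomial growth of order exactly $n$ with the ambient dimension is precisely what makes the required bound attainable; for groups of polynomial growth of smaller order, the analogous estimate would fail because the Ahlfors regularity of $\tilde N$ itself would drop. Once this combinatorial core is in place, transferring it to $\tilde N$ is routine, and the Heinonen--Koskela theorem then delivers the $n$-Loewner property.
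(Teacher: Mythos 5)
Your proposal follows essentially the same route as the paper: invoke the Heinonen--Koskela characterization of the $n$-Loewner property via Ahlfors $n$-regularity plus a weak $(1,n)$-Poincar\'e inequality, prove a combinatorial $(1,1)$-Poincar\'e inequality on the Cayley graph of $\pi_1(N)$ by Kleiner's path-averaging argument, and transfer it to $\tilde N$; the paper does the transfer by quoting Kanai's local Poincar\'e inequality together with the Coulhon--Saloff-Coste discretization theorem, which is the packaged version of the fundamental-domain chaining you describe.

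One point in your last paragraph is misattributed and worth correcting, because it reverses where the hypothesis $\ord(\pi_1(N))=n$ actually enters. The multiplicity bound in the path-averaging argument (that the map $(y,i)\mapsto y\gamma_w(i)$ is at most $(r+1)$-to-$1$) is purely combinatorial and holds in \emph{any} finitely generated group; the resulting constant $(r+1)V(2r)/V(r)$ becomes $\lesssim r$ as soon as $\pi_1(N)$ has polynomial growth of \emph{any} order, since that gives volume doubling. Accordingly, the paper's $(1,1)$-Poincar\'e inequality (its Theorem \ref{thm:1_1_tilde_M}) is stated and proved assuming only polynomial growth, not growth of order $n$. The exact order $n$ is needed, rather, for the \emph{other} input to Heinonen--Koskela, namely Ahlfors $n$-regularity of $\tilde N$: if $\ord(\pi_1(N))<n$, large balls in $\tilde N$ have volume $\approx r^{\ord(\pi_1(N))}\ll r^n$, so the space is not Ahlfors $n$-regular and the $n$-modulus bound fails, even though the Poincar\'e inequality still holds. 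So the dependence on $\ord=n$ sits entirely in the volume-growth estimate you correctly verify via Milnor--\v{S}varc, not in the combinatorial core of the Poincar\'e argument.
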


\begin{remark}\label{remark:AhlforsRegularity}
Since $\tilde N$ is Ahlfors $n$-regular and geodesic, the Loewner property \eqref{eq:Loewner} holds with $\phi$ satisfying 
\begin{align*}
\phi(t) \approx 
  \begin{cases}
    \log \left( t \right), & \text{ when $t$ is small} \\
    \log \left( t \right)^{1-n}, & \text{ when $t$ is large};
  \end{cases}
\end{align*}
see \cite[Theorem 3.6]{HK}.
\end{remark}

By a result of Heinonen and Koskela \cite[Corollary 5.13]{HK}, $\tilde N$ is $n$-Loewner if and only if $\tilde N$ supports a weak $(1,n)$-Poincar\'e inequality: there exists $C>0$ and $\lambda\ge 1$ for which 
\begin{align}\label{eq:Poincare1n}
\fint_{B(x,r)} |f-f_{B(x,r)}| \leq  Cr \left( \fint_{ B(x,\lambda r)}|\nabla f |^n \right)^{\frac{1}{n}}
\end{align}
for all $x \in \tilde N$, $r > 0$ and $f \in C^\infty(\tilde N)$. 
Here we denote by $f_{B(x,r)}$ the average
\begin{align*}
  \fint_{B(x,r)} f(y) \, \mathrm{d}y.
\end{align*}

The weak $(1,n)$-Poincar\'e inequality follows directly from Kleiner's weak $(2,2)$-Poincar\'e inequality in \cite[Theorem 2.2.]{KleinerB:NewpGt} by H\"older's inequality. Laurent Saloff-Coste's argument in \cite{KleinerB:NewpGt} can, however, be used to show also that $\tilde N$ satisfies a weak $(1,1)$-Poincar\'e inequality. Since we have not seen this explicitly stated in the literature, we give a short proof based on \cite[Theorem 2.2]{KleinerB:NewpGt} and \cite[p. 308-309]{CoulhonSaloff-Coste93} for the reader's convenience.
\begin{theorem}
\label{thm:1_1_tilde_M}
Let $N$ be a closed Riemannian $n$-manifold with a polynomially growing fundamental group. Let $\tilde N$ be the universal cover of $N$. Then there exists $C>0$ so that
\begin{align}\label{eq:Poincare1n}
  \fint_{B(x,r)} |f- f_{B(x,r)}|  \leq  Cr \fint_{ B(x,3r)}|\nabla f|
\end{align}
for all $x \in \tilde N$, $r > 0$, and $f \in C^\infty(\tilde N)$.
\end{theorem}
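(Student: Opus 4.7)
The plan is to upgrade Kleiner's weak $(2,2)$-Poincar\'e inequality on $\tilde N$ to the desired $(1,1)$-form by revisiting the chaining-along-paths argument of Saloff-Coste in the appendix of \cite{KleinerB:NewpGt} and replacing the use of Cauchy--Schwarz by the pointwise multiplicity bound of Coulhon and Saloff-Coste \cite[pp.~308--309]{CoulhonSaloff-Coste93}. The structural inputs on $\tilde N$ are (i) Ahlfors $n$-regularity, which gives volume doubling, and (ii) the cocompact isometric action of $\Gamma := \pi_1(N)$, which by assumption is a group of polynomial growth of degree at most $n$.

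Fix a ball $B = B(x_0, r) \subset \tilde N$ and $f \in C^\infty(\tilde N)$. The first step is to construct, $\Gamma$-equivariantly, a family of paths $\{\gamma_{x,y}\}_{x,y \in B}$, each contained in $3B$ and of length at most $Cr$, obtained by lifting a canonical word-metric path system on the Cayley graph of $\Gamma$ through a Borel section of $\tilde N \to N$. The fundamental theorem of calculus then gives $|f(x)-f(y)| \le \int_{\gamma_{x,y}} |\nabla f|\,\mathrm{d}s$, and integrating over $(x,y) \in B \times B$ and applying Fubini yields
\[
\fint_B |f - f_B| \le \int_{3B} |\nabla f(z)|\, m(z)\, \mathrm{d}\haus^n(z),
\]
where $m(z)$ is the suitably normalized multiplicity function recording the measure of pairs $(x,y) \in B \times B$ whose path $\gamma_{x,y}$ traverses the point $z$.

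The decisive estimate is the uniform pointwise bound $m(z) \le Cr/\haus^n(B)$ for $z \in 3B$. It follows from the polynomial growth of $\Gamma$: the number of word-metric paths of length at most $Cr$ in the Cayley graph of $\Gamma$ passing through any fixed vertex is comparable to the volume of a ball of radius $r$, i.e.\ to $\haus^n(B)$, and dividing by the typical length $Cr$ yields the stated bound. Combining this with the displayed inequality produces $\fint_B |f - f_B| \le Cr \fint_{3B}|\nabla f|$ once the doubling ratio $\haus^n(3B)/\haus^n(B)$ is absorbed into the constant.

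The main obstacle is this multiplicity bound itself. Saloff-Coste's exposition in \cite{KleinerB:NewpGt} combines the path-chaining with Cauchy--Schwarz in a way that requires only an $L^2$-averaged multiplicity; this suffices for $(2,2)$-Poincar\'e but loses the linear structure needed for $(1,1)$. Establishing the pointwise $L^\infty$-multiplicity demands a more balanced choice of the path system that genuinely exploits the polynomial growth of $\Gamma$, and this is exactly what is carried out on pp.~308--309 of \cite{CoulhonSaloff-Coste93}. The remaining transfer of the discrete combinatorial estimate from the Cayley graph of $\Gamma$ back to $\tilde N$ through the equivariant covering is routine given Ahlfors $n$-regularity and a compact fundamental domain.
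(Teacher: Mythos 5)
Your proposal follows the Saloff-Coste chaining philosophy, but organizes it very differently from the paper. The paper's proof separates cleanly into three pieces: a \emph{local} $(1,1)$-Poincar\'e inequality on $\tilde N$ with $r$-dependent constant (Kanai), a self-contained discrete $(1,1)$-Poincar\'e inequality on the group $\pi_1(N)$ with explicit constant $(r+1)V(2r)/V(r)$ (Lemma~\ref{lemma:GroupHasPoincare}), and an appeal to the Coulhon--Saloff-Coste transfer theorem \cite[Th\'eor\`eme~7.2.(3)]{CoulhonSaloff-Coste95}, which patches the local and discrete inequalities into a global one. You instead propose a single direct chaining argument on $\tilde N$.

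There is a genuine gap at your Fubini step. After writing $|f(x)-f(y)|\le\int_{\gamma_{x,y}}|\nabla f|\,\mathrm{d}s$ and averaging over $(x,y)\in B\times B$, Fubini alone does not yield a volume integral $\int_{3B}|\nabla f(z)|\,m(z)\,\mathrm{d}\haus^n(z)$: the paths $\gamma_{x,y}$ have Hausdorff $n$-measure zero in $\tilde N$, so one must either thicken them into tubes or --- the standard route --- invoke a local Poincar\'e inequality on fundamental-domain-scale balls along each chain before Fubini can be applied to a sum of \emph{volume} integrals. That local-to-global patching is exactly what you call a ``routine'' transfer, but it is the content of the Coulhon--Saloff-Coste theorem, and it is where Kanai's local Poincar\'e inequality enters; none of this is routine enough to elide. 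Separately, the heuristic for the multiplicity bound is off: the number of pairs $(x,y)\in B\times B$ whose path passes through a fixed $z\in 3B$ is comparable to $\haus^n(B)\cdot r$, not $\haus^n(B)$, and it is division by the normalization $\haus^n(B)^2$ coming from the double average (not ``by the typical length $Cr$'') that gives $m(z)\lesssim r/\haus^n(B)$. The paper's Lemma~\ref{lemma:GroupHasPoincare} isolates this counting precisely as the observation that $(x,i)\mapsto x\gamma_y(i)$ is at most $(r+1)$-to-$1$. Your final bound is correct, but the derivation as written does not produce it.
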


The proof is based on the following lemma. In the statement, we assume that we have chosen a fixed finite (symmetric) generating set $S$ for a group $\Gamma$, and denote by 
\[
B(x,r)=\{ x s_1\cdots s_r \in \Gamma \colon s_i \in S\ \mathrm{for\ } i=1,\ldots, r\}
\]
the ball of radius $r>0$ about $x$ in $\Gamma$, $V(r) = \#B(e,r)$, 
\begin{align*}
  f_{B(x,r)} := \frac{1}{V(r)}\sum_{y \in B(x,r)} f(y)
  \quad \text{ and } \quad
  \nabla f (y) := \sum_{z \in yS} \left| f(z) - f(y) \right|.
\end{align*}

\begin{lemma}\label{lemma:GroupHasPoincare}
Let $\Gamma$ be a finitely generated group. Then 
\begin{align}\label{eq:PoincareGroup}
\sum_{y \in B(x,r)} | f(y) - f_{B(x,r)} | \leq (r+1) \frac{V(2r)}{V(r)} \sum_{y \in B(x,3 r)} | \nabla f(y)|
\end{align}
for all $r > 0$, $x \in \Gamma$, and $f \colon \Gamma \to \R$.
\end{lemma}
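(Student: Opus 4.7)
The plan is to reduce the left-hand side to a double sum $\sum_{(y,z)\in B(x,r)^2} |f(y)-f(z)|$ via the triangle inequality, to bound each pairwise difference by a telescoping sum of discrete gradients along a geodesic word in the Cayley graph, and then to reorganise the resulting triple sum by the left-translation symmetry of $\Gamma$, grouping pairs according to $g := y^{-1}z$.

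To implement this, I would first fix, for each $g \in \Gamma$, a geodesic word $\sigma_g = s_1^g \cdots s_{\ell_g}^g$ with $s_i^g \in S$ and $\ell_g = d(e,g)$, and write $g^{(i)} := s_1^g \cdots s_i^g$ for its $i$th prefix. From the definition of $f_{B(x,r)}$ and the triangle inequality,
\[
\sum_{y \in B(x,r)} |f(y) - f_{B(x,r)}| \leq \frac{1}{V(r)} \sum_{(y,z) \in B(x,r)^2} |f(y) - f(z)|.
\]
For each such pair with $g := y^{-1}z \in B(e,2r)$, telescoping along the geodesic $(yg^{(i)})_{i=0}^{\ell_g}$ from $y$ to $z$ gives
\[
|f(y) - f(z)| \leq \sum_{i=0}^{\ell_g - 1} |\nabla f(yg^{(i)})|,
\]
and the estimate $d(yg^{(i)}, x) \leq d(y,x) + i \leq r + \ell_g - 1 < 3r$ ensures that every intermediate vertex lies in $B(x, 3r)$.

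The decisive step is to interchange the order of summation, fixing $g$ first. For each $g \in B(e, 2r)$ and each prefix index $i \in \{0, \ldots, \ell_g - 1\}$, the right translation $y \mapsto yg^{(i)}$ is a bijection from $B(x, r)$ onto $B(x,r) g^{(i)} \subset B(x, 3r)$, so
\[
\sum_{y \in B(x,r)} |\nabla f(yg^{(i)})| \leq \sum_{w \in B(x, 3r)} |\nabla f(w)|.
\]
Summing over $i$ produces a factor of $\ell_g$, and summing over $g \in B(e, 2r)$ produces the factor $\sum_{g \in B(e, 2r)} \ell_g$, which an elementary count controls by $(r+1) V(2r)$. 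Dividing through by $V(r)$ then yields \eqref{eq:PoincareGroup}.

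The main obstacle I anticipate is keeping the combinatorial bookkeeping tight enough to produce the precise constant $(r+1)$ rather than the looser bound $2r$ that comes directly from $\ell_g \leq 2r$; the left-translation invariance of the Cayley graph together with the consistent choice of geodesic words $\sigma_g$ reduces the heart of the argument to a clean change of variables, so no conceptual difficulty beyond what is already present in the referenced argument of Saloff-Coste is expected.
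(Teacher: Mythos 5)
Your argument follows the paper's proof essentially step for step: triangle inequality to pass to the double sum, telescoping $|f(y)-f(z)|$ along a fixed geodesic word for $g=y^{-1}z$, and a change of variables using that right translation $y\mapsto y g^{(i)}$ is an injection of $B(x,r)$ into $B(x,3r)$.

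The one place your bookkeeping does not close is the claim that $\sum_{g\in B(e,2r)}\ell_g \leq (r+1)V(2r)$; that sum is generically comparable to $2r\,V(2r)$, and there is no ``elementary count'' that improves it to $(r+1)V(2r)$. Your reorganized triple sum therefore yields the constant $\tfrac{2r\,V(2r)}{V(r)}$ rather than $\tfrac{(r+1)V(2r)}{V(r)}$. This is not a defect peculiar to your write-up: the paper's own multiplicity claim (that $(\xi,i)\mapsto\xi\gamma_y(i)$ is at most $(r+1)$-to-one) is stated for $y\in B(x,r)$, i.e.\ $|y|\le r$, while the variable actually used in the estimate ranges over $B(e,2r)$ and has word length up to $2r$, so the same slack by a factor of $2$ appears there. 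For the intended application this is harmless, since Theorem~\ref{thm:1_1_tilde_M} only needs a bound of the form $C r\,\tfrac{V(2r)}{V(r)}$; to keep your argument self-contained you should simply state the bound with the factor $2r$ (or $2r+1$) in place of $r+1$ and note that the downstream use is unaffected.
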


\begin{proof}
We fix for each $g \in G$ a geodesic $\gamma_g \colon \{ 0 , \ldots , |g| \} \to \Gamma$ in $\Gamma$ connecting the neutral element $e$ of $\Gamma$ to $g$.

For any $r > 0$ and $ y \in B(x,r)$ the mapping 
\begin{align*}
B(x,r) \times \{ 0, \ldots , |y|\} \to G, \quad (x,i) \mapsto x \gamma_y(i),
\end{align*}
is at most $(r+1)$-to-$1$. 

For every $y_0 \in B(x,r)$, we have 
\begin{align*}
V(r) &\sum_{y \in B(x,r)} |f(y) - f_{B(x,r)}(x)| \leq \sum_{y,z \in B(x,r)} |f(y) - f(z)| \\
&\leq \sum_{y \in B(x,r)} \sum_{w \in B(x,2r)} |f(yw) - f(z)| \\
&\leq \sum_{y \in B(x,r)} \sum_{w \in B(x,2r)} \sum_{i = 0}^{\ell(\gamma_w)-1} |f(y\gamma_w^{i+1}) - f(y\gamma_w^{i})| \\
&\leq \sum_{w \in B(x,2r)} \sum_{i = 0}^{\ell(\gamma_w)-1} | \nabla f (y_0 \gamma_w^i) |.
\end{align*}
On the other hand,
\begin{align*}  
\sum_{w \in B(x,2r)} \sum_{i = 0}^{\ell(\gamma_w)} | \nabla f (y\gamma_w^i) |
&\leq \sum_{w \in B(x,2r)} \sum_{z \in B(x,3r)} (r+1) | \nabla f (z)| \\
&\leq V(2r) (r+1) \sum_{z \in B(x,3r)} | \nabla f (z)|.
\end{align*}
This proves the claim.  
\end{proof}

\begin{proof}[Proof of Theorem \ref{thm:1_1_tilde_M}]
For every $r>0$, the universal cover $\tilde N$ satisfies the \emph{local} $(1,1)$-Poincar\'e inequality
\begin{align}\label{eq:PoincareLocal}
  \int_{B(x,r)} | f(y) - f_{B(x,r)} | \, dy
  \leq  \beta
  \int_{B(x, r)}|\nabla f|
\end{align}
for $f\in C^\infty(\tilde N)$, where $\beta$ depends on $r$; see Kanai \cite[Lemma 8]{Kanai}.

Since $\pi_1(N)$ has polynomial growth, the ratio $V(2r)/V(r)$ is uniformly bounded for all $x\in \pi_1(N)$ and $r>1$. Thus, by Lemma \ref{lemma:GroupHasPoincare}, there exists $C>1$ so that 
\[
\sum_{y \in B(x,r)} | f(y) - f_{B(x,r)} | \leq C r \sum_{y \in B(x,3 r)} | \nabla f(y)|
\]
for all $r > 0$, $x \in \Gamma$ and $f \colon \Gamma \to \R$.

The claim now follows from \cite[Th\'eor\`eme 7.2.(3)]{CoulhonSaloff-Coste95}.
\end{proof}

This concludes the proof of Theorem \ref{theorem:Loewner}.

\section{Euclidean volume growth and asphericity}
\label{sec:asphericity}

In this section we show that a closed quasiregularly elliptic manifold with maximally growing fundamental group is aspherical. We obtain this result by combining the following lemmas.

\begin{lemma}
\label{lemma:ExistsAProperMap}
Suppose $N$ is a closed quasiregularly elliptic $n$-manifold, 
$n\ge 2$, with $\ord(\pi_1(N))=n$. Then there exists a 
proper quasiregular map $\R^n \to \tilde N$ 
into the universal cover $\tilde N$ of $N$.
\end{lemma}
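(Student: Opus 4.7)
My plan has two steps: produce a lift, and show it is proper.

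Lift. Since $\R^n$ is simply connected and $p\colon \tilde N \to N$ is a universal cover, any $K$-quasiregular $f\colon \R^n \to N$ (which exists by hypothesis) admits a continuous lift $\tilde f\colon \R^n \to \tilde N$ with $p\circ \tilde f = f$. Because $p$ is a local isometry, the distortion inequality and local Sobolev regularity transfer, and $\tilde f$ is likewise $K$-quasiregular.

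Properness (the bulk of the work). Argue by contradiction via a conformal modulus comparison. Suppose $\tilde f$ is not proper; then there exist $y_0\in \tilde N$, $R>0$, and a sequence $(x_k)\subset \R^n$ with $|x_k|\to\infty$ but $\tilde f(x_k)\in \overline{B_{\tilde N}(y_0,R)}$. Fix a non-branch point $\xi\in \R^n$ of $\tilde f$ (such exists by Reshetnyak's theorem, as the branch set has topological dimension at most $n-2$); for a small $\rho>0$, the image $E:=\tilde f(\overline{B(\xi,\rho)})$ is a non-degenerate continuum in $\tilde N$ with $\diam E\geq \delta>0$. For each large $k$, construct a continuum $F_k\subset \R^n$ containing $x_k$ with $\diam \tilde f(F_k)\geq \delta'$ for some uniform $\delta'>0$. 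Then: in $\R^n$, $\operatorname{mod}_n \Gamma(\overline{B(\xi,\rho)},F_k) \leq C(\log |x_k|)^{1-n}\to 0$; the Poletsky modulus inequality for the $K$-quasiregular $\tilde f$, applied in its lift form to the target family, transfers this decay to $\operatorname{mod}_n \Gamma(E,\tilde f(F_k))\to 0$; whereas the $n$-Loewner property of $\tilde N$ (Theorem~\ref{theorem:Loewner}) applied to the continua $E$ and $\tilde f(F_k)$---of diameter at least $\min(\delta,\delta')$ and at distance at most $2R+\diam E+\diam \tilde f(F_k)$, which is bounded---gives $\operatorname{mod}_n \Gamma(E,\tilde f(F_k))\geq \phi(t_0)>0$. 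The contradiction of these two estimates concludes the argument.

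The main obstacle is securing the uniform lower bound $\diam \tilde f(F_k)\geq \delta'$, since a quasiregular map can a priori shrink at large scales. The plan is to exploit cocompactness of the $\pi_1(N)$-action on $\tilde N$: choose deck transformations $\gamma_k$ so that $\gamma_k\tilde f(x_k)$ lies in a fixed compact fundamental domain $D\subset \tilde N$, and consider the translated sequence $g_k(y):=\gamma_k\tilde f(x_k+y)$. Each $g_k$ is $K$-quasiregular with $g_k(0)\in D$; by the local H\"older equicontinuity of $K$-quasiregular maps (in the oscillation-normalized form) a subsequence converges locally uniformly to a $K$-quasiregular limit $g_\infty\colon \R^n \to \tilde N$, which is non-constant by an argument using non-constancy of the original $f$ together with the local branched-covering structure. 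The non-constant limit then yields a uniform oscillation lower bound $\diam g_k(\overline{B(0,r_0)})\geq \delta'$ for some fixed $r_0>0$ and all large $k$; undoing the isometry $\gamma_k$ on $\tilde N$ gives $\diam \tilde f(\overline{B(x_k,r_0)})\geq \delta'$, so setting $F_k:=\overline{B(x_k,r_0)}$ completes the construction.
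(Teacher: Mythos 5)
Your overall strategy---lift, then prove properness via a modulus comparison between $\R^n$ and the Loewner space $\tilde N$---is the morally correct one, and indeed is what sits inside the Onninen--Rajala theorem that the paper invokes. But your write-up omits the single ingredient that makes that comparison close, and two of your steps as stated have genuine gaps.

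First, you never normalize the quasiregular map. The paper does not lift an arbitrary $K$-quasiregular $f\colon \R^n\to N$; it first applies Zalcman's lemma (Bonk--Heinonen) to produce a quasiregular $f$ with the Jacobian growth bound $\int_{B(x,r)} J_f \le C r^n$ for all balls. This bound survives the lift because the covering projection is a local isometry, and it is exactly what makes the properness argument run: it controls the multiplicity of $\tilde f$ over compact sets, uniformly in scale. Without it, the sequence of translates $g_k(y)=\gamma_k\tilde f(x_k+y)$ can perfectly well degenerate to a constant locally uniformly (a $K$-quasiregular map need not have any oscillation lower bound at large scales), so your claim that the subsequential limit $g_\infty$ is non-constant is asserted rather than proved, and I do not see how to prove it in this generality.

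Second, the modulus transfer has its direction confused. Poletsky's inequality gives $\operatorname{mod}_n \tilde f\Gamma \le K_I\,\operatorname{mod}_n \Gamma$ for a path family $\Gamma$ in the domain; applied to $\Gamma=\Gamma(\overline{B(\xi,\rho)},F_k)$ this bounds $\operatorname{mod}_n \tilde f\Gamma$ from above. But $\tilde f\Gamma$ is only a \emph{subfamily} of $\Gamma(E,\tilde f(F_k))$, so by monotonicity you only get another upper bound on the same quantity, while Loewner gives a lower bound on the \emph{larger} family $\Gamma(E,\tilde f(F_k))$. There is no contradiction. To bound $\operatorname{mod}_n \Gamma(E,\tilde f(F_k))$ from above by a domain-side modulus one needs V\"ais\"al\"a's $K_O$-inequality, which requires controlling how many essentially disjoint lifts each target path has, i.e.\ a multiplicity bound. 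That is again precisely where the Jacobian bound enters. So the gap in step one and the gap in step two are the same missing ingredient.

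The paper sidesteps all of this: it applies Zalcman's lemma to get the Jacobian bound, lifts, and then quotes \cite[Theorem 12.1]{OnninenRajala} (noting that hypothesis (a) there can be replaced by the Jacobian bound), which packages exactly the modulus argument you are reaching for. If you want to keep your self-contained modulus argument, you must (i) replace the arbitrary $f$ by the Zalcman-normalized one, (ii) use the Jacobian bound plus Ahlfors $n$-regularity of $\tilde N$ to bound the multiplicity of $\tilde f$ over the relevant compacta, (iii) run the $K_O$/V\"ais\"al\"a inequality rather than Poletsky, and (iv) use the Jacobian bound again (together with the local H\"older estimate it gives) to get the uniform oscillation lower bound you need for $\diam \tilde f(F_k)\ge \delta'$. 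At that point you will have essentially reconstructed the relevant portion of \cite{OnninenRajala}.
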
 
\begin{proof}
By Zalcman's lemma (see Bonk--Heinonen \cite[Corollary 2.2]{BonkM:Quamc}), there exists a uniformly locally H\"older continuous quasiregular map 
$f\colon \R^n \to N$ satisfying
\begin{equation}
\label{eq:J}
\int_{B^n(x,r)} J_f \le C r^n
\end{equation}
where $C>0$ is a constant independent of the ball
$B^n(x,r)\subset \R^n$. 
Let $\tilde f$ be a lift of $f$ to the universal cover 
$\tilde N$ of $N$. Since the universal covering map is a 
local isometry, $\tilde f$ again satisfies inequality 
\eqref{eq:J}. 

Since $\tilde N$ is Ahlfors $n$-regular and $n$-Loewner, we have that, for every $a \in \tilde N$, $d(\tilde f(x),a) \to \infty$ as $|x| \to \infty$ by the Onninen--Rajala theorem \cite[Theorem 12.1]{OnninenRajala}. Indeed, (a) in \cite[Theorem 12.1]{OnninenRajala} can be replaced with \eqref{eq:J}; see Lemma 12.13 and (12.3) in \cite{OnninenRajala}. Thus $\tilde f$ is a proper map.
\end{proof}

\begin{lemma}
\label{lemma:asphericity}
Let $N$ be a connected, simply connected, and oriented Riemannian $n$-manifold. Then $N$ is aspherical if there exists a proper branched cover $\R^n \to \tilde  N$ into the universal cover $\tilde N$ of $N$.
\end{lemma}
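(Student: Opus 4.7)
The plan is to show $N$ is contractible, which, since $N$ is simply connected (so $\tilde N = N$), is equivalent to its asphericity. The strategy uses the proper branched cover $f \colon \mathbb{R}^n \to N$ to transport topological triviality from $\mathbb{R}^n$ to $N$, reducing the problem via iterated Hurewicz to showing $H_k(N;\mathbb{Z}) = 0$ for every $k \geq 1$.

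First, since $f$ is proper and $\mathbb{R}^n$ is non-compact, $N$ itself is non-compact (otherwise $\mathbb{R}^n = f^{-1}(N)$ would be compact), so $H_n(N;\mathbb{Z}) = 0$. Moreover, $f$ is discrete, open, and proper between oriented $n$-manifolds, so its fibres are finite and $f$ has a well-defined finite degree $d \geq 1$; away from the codimension-$2$ branch-value set $V_f = f(B_f) \subset N$, the map $f$ restricts to a $d$-sheeted topological covering $\mathbb{R}^n \setminus f^{-1}(V_f) \to N \setminus V_f$. Finite-degree proper branched covers of oriented $n$-manifolds admit a transfer $\tau \colon H_*(N;\mathbb{Z}) \to H_*(\mathbb{R}^n;\mathbb{Z})$ satisfying $f_* \circ \tau = d \cdot \mathrm{id}$; since $H_k(\mathbb{R}^n;\mathbb{Z}) = 0$ for $k \geq 1$, this forces $d \cdot H_k(N;\mathbb{Z}) = 0$ in positive degrees, so $N$ is rationally acyclic and all positive-degree integer homology is $d$-torsion.

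To finish, one must upgrade rational acyclicity to integral asphericity by killing the residual $d$-torsion. Given $\alpha \colon S^k \to N$ with $k \geq 2$, I would first make $\alpha$ transverse to $V_f$, so $A = \alpha^{-1}(V_f) \subset S^k$ has codimension at least $2$. Off $A$, the map $\alpha$ takes values in $N \setminus V_f$, which is covered $d$-fold by $\mathbb{R}^n \setminus f^{-1}(V_f)$. Using the simple connectivity of $S^k$, the codimension-$2$ position of $V_f$, and the local cone structure of a branched cover at a branch value, I would construct a global continuous lift $\tilde\alpha \colon S^k \to \mathbb{R}^n$. Contractibility of $\mathbb{R}^n$ then yields a null-homotopy of $\tilde\alpha$, and composition with $f$ produces a null-homotopy of $\alpha$. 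Iterating via Hurewicz gives $\pi_k(N) = 0$ for every $k \geq 2$, so $N$ is aspherical.

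The technical heart of the argument is this last step: the transfer delivers rational acyclicity cleanly, but integral asphericity requires the global lifting construction, which must handle the potentially non-trivial monodromy of the covering $\mathbb{R}^n \setminus f^{-1}(V_f) \to N \setminus V_f$ around meridians of $V_f$. The simple connectivity of $S^k$, the contractibility of $\mathbb{R}^n$, and the codimension-$2$ location of the branch locus should conspire to make such a lift exist, and pinning down this combinatorial-topological argument at branch values is where the real work lies.
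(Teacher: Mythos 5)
Your approach diverges from the paper's, and the divergence opens a gap that you acknowledge but do not close. The transfer argument is sound: a proper branched cover $f\colon\R^n\to \tilde N$ of finite positive degree $d$ between oriented $n$-manifolds yields, via Poincar\'e duality and $f^*$ on compactly supported cohomology, an Umkehr map $\tau\colon H_k(\tilde N;\Z)\to H_k(\R^n;\Z)$ with $f_*\circ\tau=d\cdot\id$, so $d\cdot H_k(\tilde N;\Z)=0$ for $k\ge 1$. But rational acyclicity of a simply connected space does not give contractibility, and the lifting step you sketch to kill the residual $d$-torsion does not obviously work. After making $\alpha\colon S^k\to \tilde N$ transverse to the codimension-$2$ branch-value set $V_f$, the preimage $A=\alpha^{-1}(V_f)$ has codimension $2$ in $S^k$; for $k\ge 3$ the complement $S^k\setminus A$ is typically not simply connected, so the covering-space lifting criterion is not automatic. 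The monodromy of the $d$-sheeted cover $\R^n\setminus f^{-1}(V_f)\to \tilde N\setminus V_f$ around meridians of $V_f$ is genuinely non-trivial (that is precisely what makes the cover branched), and you give no reason why $\alpha_*\pi_1(S^k\setminus A)$ should land inside $f_*\pi_1(\R^n\setminus f^{-1}(V_f))$, nor why a partial lift over $S^k\setminus A$ should extend continuously across $A$. This is a missing idea, not merely a technical loose end.

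The paper avoids the whole issue by arguing with cup products in cohomology rather than by lifting spheres. If $\pi_k(\tilde N)\ne 0$ for a smallest $k\ge 2$, Hurewicz and the universal coefficient theorem produce $H^\ell(\tilde N;\Z)\ne 0$ for some $2\le \ell<n$; Poincar\'e duality then gives classes $c\in H^\ell(\tilde N;\Z)$ and $c'\in H^{n-\ell}_c(\tilde N;\Z)$ with $c\cup c'\ne 0$ in $H^n_c(\tilde N;\Z)\cong\Z$. Since $f^*c=0$ (because $H^\ell(\R^n;\Z)=0$ for $\ell\ge 1$) while $f^*(c\cup c')\ne 0$ (because $f^*$ on $H^n_c$ is multiplication by the positive degree), the multiplicativity of $f^*$ gives an immediate contradiction. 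No monodromy analysis or torsion bookkeeping is required; the decisive input is simply that pullback by a proper map is a ring homomorphism on compactly supported cohomology. Your transfer is consistent with this picture but cannot finish the job on its own.
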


\begin{proof}
  Suppose there exists $k\ge 2$ for which $\pi_k(\tilde N)\ne 0$. Let $k\ge 2$ be the smallest such index.
  Then, by the Hurewicz isomorphism theorem, $\pi_k(\tilde N)$ is isomorphic to $H_k(\tilde N)$.

If $H_k(\tilde N)$ is free abelian, the universal coefficient theorem for cohomology immediately yields $H^k(\tilde N,\Z)\ne 0$. If $H_k(\tilde N)$ is not free, $\Ext(H_k(\tilde N),\Z)\ne 0$. Thus $H^{k+1}(\tilde N,\Z)\ne 0$ by the universal coefficient theorem.

We conclude that there exists an index $\ell \geq 2$ such that
  $H^\ell(\tilde N, \Z) \neq 0$. Since $N$ receives a proper branched cover from $\R^n$
  the fundamental group $\pi_1(N)$ must be infinite. Thus the universal cover $\tilde N$ is unbounded,
  so $H_c^0(\tilde N, \Z) = 0 $ and by the Poincar\'e duality $H^n(\tilde N, \Z) = 0$. In particular, $\ell<n$.

Let $f\colon \R^n \to \tilde N$ be a proper branched cover.
Then $f^* \colon H^n_c(\tilde N;\Z) \to H^n_c(\R^n;\Z)$ is non-trivial.

By the Poincar\'e duality, there exists
  $c  \in H^\ell(\tilde N,\Z)$   
  and 
  $c' \in H^{n-\ell}_c(\tilde N,\Z)$ 
  satisfying 
  $c\cup c'\ne 0\in H^n_c(\tilde N, \Z)$. 
  Then 
  \begin{align*}
    f^*c \cup f^*c' = f^*(c\cup c')\ne 0 \in H^n_c(\R^n,\Z)
  \end{align*}
This is a contradiction, since $f^*c= 0$. Thus $\pi_k(N)=0$ for all $k\ge 2$.
\end{proof}

\section{Closed aspherical quasiregularly elliptic manifolds}
\label{sec:cgt}

In this section we show that
\eqref{item:aspherical} implies \eqref{item:Zn} 
in the setting of Theorem \ref{thm:TFAE}; 
see also \cite[Corollary 2.43]{Gromov-book}.
The argument of the proof is almost identical to the proof of theorem \cite[Theorem 5.9.]{BridsonGersten}:
\emph{a finitely generated group that is quasi-isometric to the group $\Z^n$ is virtually $\Z^n$}.
Due to the differences in the setting, we give a proof for the reader's convenience.
\begin{lemma}\label{lemma:last}
\label{lemma:virt_Z}
Let $N$ be a closed, connected, aspherical $n$-manifold with $\ord (\pi_1(N))\le n$. Then $\pi_1(N)$ is virtually $\Z^n$ and torsion free.
\end{lemma}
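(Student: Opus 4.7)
The plan is to reduce the statement to the arithmetic of torsion-free finitely generated nilpotent groups by comparing three invariants attached to a suitable finite-index subgroup $\Gamma$ of $\pi_1(N)$: the cohomological dimension $\cd(\Gamma)$, the Hirsch length $h(\Gamma)$, and the Bass--Guivarc'h polynomial growth exponent $\ord(\Gamma)$. All three will be forced to equal $n$, and the equality $\ord(\Gamma)=h(\Gamma)$ is classically known to collapse $\Gamma$ to an abelian group.

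First I would show that $\pi_1(N)$ is torsion-free. Since $N$ is aspherical, $\tilde N$ is contractible; a finite-order $g\in\pi_1(N)$ would let the cyclic group $\langle g\rangle$ act freely and properly discontinuously on $\tilde N$, producing a $K(\Z/k,1)$ that is simultaneously an $n$-manifold. This contradicts the fact that a finite cyclic group has nonzero cohomology in arbitrarily high degrees while a topological $n$-manifold has cohomological dimension at most $n$. Next, by hypothesis $\ord(\pi_1(N))\le n$, so Gromov's polynomial growth theorem supplies a finite-index nilpotent subgroup $\Gamma\le\pi_1(N)$, which inherits torsion-freeness. The finite cover $\tilde N/\Gamma\to N$ is again a closed aspherical $n$-manifold, so it is a $K(\Gamma,1)$, and therefore $\cd(\Gamma)=n$. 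For a torsion-free finitely generated nilpotent group, $\cd(\Gamma)$ equals the Hirsch length $h(\Gamma)=\sum_{k\ge 1}\rank(\Gamma_k/\Gamma_{k+1})$, so $h(\Gamma)=n$.

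To close, I would invoke the Bass--Guivarc'h formula
\[
\ord(\Gamma)=\sum_{k\ge 1} k\cdot\rank(\Gamma_k/\Gamma_{k+1})\ge h(\Gamma)=n.
\]
Since growth is preserved by passing to finite-index subgroups, $\ord(\Gamma)=\ord(\pi_1(N))\le n$, so equality holds throughout. This forces $\rank(\Gamma_k/\Gamma_{k+1})=0$ for every $k\ge 2$; since each quotient of the lower central series is finitely generated abelian, these quotients are finite, and an induction from the top of the (eventually trivial) lower central series shows that $[\Gamma,\Gamma]=\Gamma_2$ is finite. Torsion-freeness of $\Gamma$ then gives $\Gamma_2=1$, so $\Gamma$ is abelian, and being torsion-free finitely generated of rank $n$ it is isomorphic to $\Z^n$. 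Thus $\pi_1(N)$ is virtually $\Z^n$ and torsion free, as claimed.

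The main obstacle is conceptual rather than computational: one has to interface three different notions of dimension for a nilpotent group (cohomological, Hirsch, growth) and know that they coincide in this regime. Each ingredient is standard in the literature on nilpotent groups, so the argument is essentially bookkeeping once torsion-freeness of $\pi_1(N)$ and the asphericity of the intermediate cover $\tilde N/\Gamma$ are in hand.
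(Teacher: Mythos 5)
Your proof is correct and follows essentially the same strategy as the paper: use asphericity to bound $\cd(\pi_1(N))$ and deduce torsion-freeness, pass to a finite-index nilpotent subgroup via Gromov, identify $\cd$ with the Hirsch length, and pit that against the Bass--Guivarc'h growth formula to force the lower central series to collapse. The only cosmetic differences are that you unpack the torsion-freeness argument directly via the contractible universal cover (where the paper just cites Brown, Prop.~VIII.2.8) and you are slightly more careful about the possibility of torsion in the lower-central-series quotients, which the paper glosses over by asserting they are free abelian.
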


\begin{proof}
Since $N$ is aspherical, the cohomological dimension $\cd(\pi_1(N))$ of $\pi_1(N)$ is at most $n$ by \cite[Prop VIII.2.2]{BrownK:Cohg}. Thus $\pi_1(N)$ has finite cohomological dimension and is torsion free by \cite[Prop VIII.2.8]{BrownK:Cohg}. 

Since $\pi_1(N)$ has polynomial growth, it is virtually nilpotent. Let $\hat N$ be a finite cover of $N$ having nilpotent fundamental group. Then $\pi_1(\hat N)$ has a lower central series
\[
\pi_1(\hat N) = \Gamma_0 \supset \Gamma_1 \supset \cdots \supset \Gamma_m = \{1\}
\]
with free abelian quotients $\Gamma_{k-1}/\Gamma_k$ for $k=1,\ldots, m$. Moreover, since $\pi_1(\hat N)$ is finitely generated, the cohomological dimension coincides with the Hirsh number of $\pi_1(\hat N)$, that is, 
\begin{equation}
\label{eq:Hirsh}
\cd(\pi_1(\hat N)) = \sum_{k=1}^m \rank \left( \Gamma_{k-1}/\Gamma_k\right).
\end{equation}
We refer to \cite[Section 8.8]{GruenbergK:Cohtgt} (especially Theorem 5) for these details.

On the other hand, by the growth formula for nilpotent groups (see Bass \cite{BassH:Degpgf}), we have
\begin{equation}
\label{eq:Bass2}
\ord(\pi_1(\hat N)) = \sum_{k=1}^m k\;\rank \left(\Gamma_{k-1}/\Gamma_k\right).
\end{equation} 
Since $\cd(\hat N)=n$ by \cite[Theorem VIII.8.1]{BrownK:Cohg}, we have 
\begin{equation}
\label{eq:equality}
\sum_{k=1}^m k\;\rank \left(\Gamma_{k-1}/\Gamma_k\right) = 
\ord(\pi_1(\hat N)) \le \cd(\pi_1(\hat N)) = \sum_{k=1}^m \rank \left( \Gamma_{k-1}/\Gamma_k\right)
\end{equation}
by combining \eqref{eq:Hirsh}, \eqref{eq:Bass2}, and \eqref{eq:equality}. Thus
\[
\rank \left( \Gamma_{k-1}/\Gamma_k \right) = 0
\]
for all $k=2,\ldots, m$ and $\pi_1(\hat N)$ is abelian. Thus $\pi_1(N)$ is virtually $\Z^n$.
\end{proof}



\def\cprime{$'$}

\end{document}